\let\ssection=\section
\renewcommand{\section}{\setcounter{equation}{0}\ssection}
\theoremstyle{plain}
\newtheorem{thm}{Theorem}
\newtheorem{lem}{Lemma}[section]
\newtheorem{prop}[lem]{Proposition}
\theoremstyle{definition}
\newcommand{\R}{\mathbb{R}}
\newcommand{\C}{\mathbb{C}}
\newcommand{\RP}{\mathbb{RP}}
\newcommand{\CP}{\mathbb{CP}}
\newcommand{\HP}{\mathbb{HP}}
\newcommand{\bbH}{\mathbb{H}}
\newcommand{\bbO}{\mathbb{O}}
\newcommand{\Id}{\textup{Id}}
\newcommand{\e}{\varepsilon}
\def\e{\varepsilon}
\def\r{\rho}
\begin{document}

\title[On fibrations with flat fibers]{On fibrations with flat fibers}

\author{
Valentin Ovsienko
\and
Serge Tabachnikov}

\address{
Valentin Ovsienko,
CNRS,
Institut Camille Jordan,
Universit\'e Claude Bernard Lyon~1,
43 boulevard du 11 novembre 1918,
69622 Villeurbanne cedex,
France}
\email{ovsienko@math.univ-lyon1.fr}

\address{
Serge Tabachnikov,
Department of Mathematics,
Pennsylvania State University, 
University Park, PA 16802, USA
}
\email{tabachni@math.psu.edu}

\date{}

\begin{abstract}
We describe pairs $(p,n)$ such that $n$-dimensional affine space is fibered by
pairwise skew $p$-dimensional affine subspaces.
The problem is closely related with
the theorem of Adams on vector fields on spheres and
the Hurwitz-Radon theory of composition of quadratic forms.
\end{abstract}

\maketitle

\thispagestyle{empty}

\section{Introduction}

The Hopf fibrations \cite{Hop}
$$
S^0\to{}S^n\to{}\RP^n,
\qquad
S^1\to{}S^{2n+1}\to{}\CP^n,
\qquad
S^3\to{}S^{4n+3}\to{}\HP^n,
\qquad
S^7\to{}S^{15}\to{}S^8
$$
provide fibrations of spheres whose fibers are \textit{great spheres}.
Algebraic topology imposes severe restrictions on possible dimensions of the spheres and the fibers,
the above list actually contains all the existing cases.
The study of such fibrations is motivated, in particular,
by the classical Blaschke conjecture of differential geometry,
see \cite{GW}-\cite{GWZ2}, \cite{Yan}, \cite{Mc} and \cite{Sal} for classification of fibrations of
spheres by great spheres up to diffeomorphism.

Given a fibration of $S^n$ by great spheres $S^p$, the radial projection from the center
on an affine hyperplane yields a fibration of $\R^n$ by \textit{pairwise skew} $p$-planes.
Two affine subspaces of an affine space are called skew if they neither intersect nor contain
parallel directions.
For example, the projection of the Hopf fibration $S^1\to{}S^3\to{}S^2$ gives
a fibration of $\R^3$ by pairwise skew straight lines 
(that lie on a nested family of hyperboloids of one sheet), see Figure \ref{FirstF}.

\begin{figure}[hbtp]
\includegraphics[width=6cm]{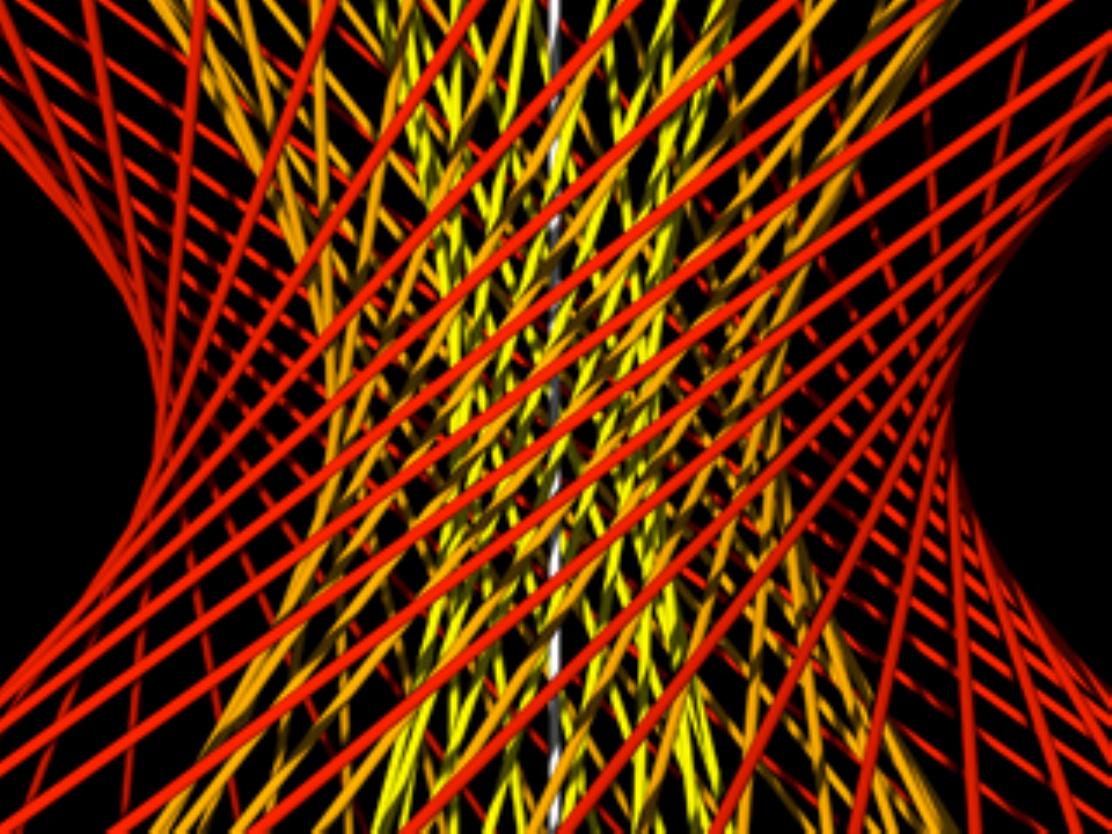}
\caption{The figure is due to David Eppstein, see the Wikipedia article
``Skew lines''.}
\label{FirstF}
\end{figure}

In this paper, we study fibrations
$$
\R^p\to\R^n\to\R^q
$$
whose fibers are pairwise skew affine subspaces.
We refer to such fibrations as $(p,n)$-\textit{fibrations}.
The topological restrictions are less prohibitive in this situation, 
and the list is considerably longer that that of Hopf.
Our goal here is to describe all pairs $(p,n)$ for which $(p,n)$-fibrations exist.
The problem of classification is not addressed here.

Let us consider an a-priori less restrictive problem:
given $p$ and $n$, does there exist a fibration of a small disc, $D^n$,
with pairwise skew flat $p$-dimensional fibers, i.e., by intersections of $D^n$ with
pairwise skew affine subspaces?
We call such fibrations \textit{local}.
A local fibration may fail to extend to the whole $\R^n$.
Yet, both problems have the same answer.

\begin{thm}
\label{TheThm}
$\R^n$ admits a continuous fibration with $p$-dimensional pairwise skew affine fibers
if and only if 
\begin{equation}
\label{TheEq}
p\leq\r(n-p)-1
\end{equation}
where $\r$ is the classical Hurwitz-Radon function.
The local problem has the same answer.
\end{thm}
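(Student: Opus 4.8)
The plan is to treat the two implications by the two classical ingredients alluded to in the abstract: the ``if'' part by the Hurwitz--Radon construction of families of matrices all of whose nonzero linear combinations are invertible, and the ``only if'' part --- for the global and the local problem at once --- by reducing it to Adams' theorem that $S^{m-1}$ carries at most $\rho(m)-1$ pointwise linearly independent tangent vector fields. Throughout I set $q=n-p$; since $\rho(q)\le q$, the inequality (\ref{TheEq}) forces $p<q$, in agreement with the elementary remark that two skew affine $p$-planes span an affine subspace of dimension $2p+1\le n$.

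\textbf{The ``if'' direction.} Assume $p\le\rho(q)-1$, i.e.\ $p+1\le\rho(q)$. By Hurwitz--Radon there are $q\times q$ real matrices $A_0=\Id,A_1,\dots,A_p$ with $\sum_{i=0}^p t_iA_i$ invertible for every $(t_0,\dots,t_p)\ne 0$. Writing $\R^n=\R^p\times\R^q$ with coordinates $(t,z)$, I would set, for $b\in\R^q$,
\[
L_b=\Bigl\{\bigl(t,\ (\Id+\textstyle\sum_{i=1}^p t_iA_i)\,b\bigr)\ :\ t\in\R^p\Bigr\}.
\]
Each $L_b$ is an affine $p$-plane; since $\Id+\sum_i t_iA_i$ is a nonzero combination of the $A_i$ it is invertible for all $t$, so the $L_b$ are pairwise disjoint, they cover $\R^n$ (the plane through $(t,z)$ is $L_b$ for $b=(\Id+\sum t_iA_i)^{-1}z$), and $(t,z)\mapsto b$ is smooth. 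A common direction of $L_b$ and $L_{b'}$, $b\ne b'$, would force $(\sum_{i=1}^p t_iA_i)(b-b')=0$ for some $t\ne 0$, impossible since $\sum_{i=1}^p t_iA_i$ is again a nonzero combination of the $A_i$. Hence $\{L_b\}$ is a $(p,n)$-fibration, and a fortiori a local one.

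\textbf{The ``only if'' direction.} Given a fibration (global, or local on a disc $D^n$) by traces of pairwise skew affine $p$-planes, I would first use an affine change of coordinates --- which preserves affine planes, skewness and fibrations --- to assume that one fiber lies in $\hat L_0:=\R^p\times\{0\}$ and that $0\in\hat L_0$ lies in the fibered region. For $c\in\R^q$ with $|c|$ small, the plane through $(0,c)$ has direction close to that of $\hat L_0$, hence is a graph; write it as $\hat L_c=\{(v,\phi_c(v)+c):v\in\R^p\}$ with $\phi_c\in\Hom(\R^p,\R^q)$, $\phi_0=0$, and $c\mapsto\phi_c$ continuous. The crucial point is that, the affine planes themselves (not merely their traces in $D^n$) being skew, $\hat L_c\cap\hat L_0=\emptyset$ and they share no direction; for $c\ne 0$ this says exactly that $\phi_c$ is injective and $c\notin\phi_c(\R^p)$. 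It follows that $(v,s)\mapsto\phi_c(v)+sc$ is an injective map $\R^p\oplus\R\to\R^q$, so $\R c$ meets $\phi_c(\R^p)$ only in $0$, and therefore $\pi_c\circ\phi_c:\R^p\to c^{\perp}$ is injective, $\pi_c$ being orthogonal projection onto $c^{\perp}$. Fixing a small $t_0>0$ and putting, for $w\in S^{q-1}$,
\[
X_i(w)=\pi_w\bigl(\phi_{t_0w}(e_i)\bigr)\in T_wS^{q-1},\qquad i=1,\dots,p,
\]
one gets $p$ continuous tangent vector fields on $S^{q-1}$ that are linearly independent at every point. By Adams' theorem $p\le\rho(q)-1$, which is (\ref{TheEq}); as the argument never used that the fibration was global, it settles the local problem as well.

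\textbf{Main obstacle.} The substantive points are, first, extracting the continuous family $c\mapsto\phi_c$ from a merely continuous fibration, i.e.\ knowing that near a given fiber the fibration is a continuously varying family of affine planes; and second, the appeal to Adams' theorem, which is the genuine topological input. I would stress that it is essential to use skewness of the affine \emph{planes} and not only of their intersections with $D^n$: the condition $c\notin\phi_c(\R^p)$ (equivalently $\hat L_c\cap\hat L_0=\emptyset$ in all of $\R^n$) is precisely what provides the ``$+1$'' in $(v,s)\mapsto\phi_c(v)+sc$ and hence the ``$-1$'' in (\ref{TheEq}); disjointness only inside the disc would yield just the weaker bound $p\le\rho(q)$.
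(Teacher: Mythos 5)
Your proposal is correct and follows essentially the same route as the paper: for sufficiency, a linear Hurwitz--Radon family $\Id,A_1,\dots,A_p$ of $q\times q$ matrices with all nonzero combinations invertible yields the (automatically global) skew fibration, and for necessity, writing nearby fibers as graphs $v\mapsto\phi_c(v)+c$ and projecting $\phi_{t_0w}(e_1),\dots,\phi_{t_0w}(e_p)$ to $T_wS^{q-1}$ produces $p$ independent tangent fields, so Adams' theorem gives $p\le\rho(q)-1$. Your closing observation that skewness of the full affine planes (not just their traces in the disc) is what supplies the extra column $c$ --- and hence the ``$-1$'' --- is exactly the content of the paper's Lemma~\ref{LemCond} applied to the pair $y$, $0$.
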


We define the Hurwitz-Radon function in the next section.

We discuss various particular cases and consequences of this theorem
in Section \ref{ExSec}.
Let us mention now just one.

We have an obvious reduction from a local $(p,n)$-fibration to
a local  $(p-1,n-1)$-fibration
(intersection with a hyperplane transverse to the fibers).
This leads to the notion of \textit{dominant} $(p,n)$-fibration
such that $(p+1,n+1)$-fibration does not exist.
Hopf's $(1,3),\,(3,7)$ and $(7,15)$-fibrations are dominant.
The remarkable fibration
$$
\R^8\to\R^{24}\to\R^{16}
$$
is the first dominant non-Hopf fibration.

Let us briefly mention a related subject,
totally skew embeddings of manifolds into Euclidean space.
A submanifold is called totally skew if the tangent spaces at any pair
of distinct points are skew, see
\cite{GT}, \cite{ST}.

\section{Hurwitz-Radon function and square identities}

The Hurwitz-Radon function $\r(q)$ is defined as follows.
Write $q=2^k(2m+1)$, then
$$
\r(q)=\left\{ \begin{array}{lcll}
2k+1, \quad&k\equiv& 0 &\mod 4\\
2k, \quad&k\equiv& 1,2 &\mod 4\\
2k+2, & k\equiv &3 &\mod 4.
\end{array}
\right.
$$
This formula was discovered in about 1920 independently by
A. Hurwitz and J. Radon in the following context.

A square identity of size $[r,s,q]$ is an identity
$$
(a_1^2+\cdots{}+a_r^2)\,(b_1^2+\cdots{}+b_s^2)
=c_1^2+\cdots{}+c_q^2,
$$
where $c_i$ are bilinear expressions in  $a_i$ and $b_i$ with
real coefficients.
The Hurwitz-Radon theorem states that  $r=\r(q)$ is the largest integer 
for which there exists a $[r,q,q]$-identity \cite{Hur}, \cite{Rad}. 

The cases $q=1,2,4$ and $8$ are special: these are the only case where $\r(q)=q$.
For example Euler's $4$ square identity reads
$$
\begin{array}{rcl}
(a_1^2+a_2^2+a_3^2+a_4^2)\,(b_1^2+b_2^2+b_3^2+b_4^2)&=&
(a_1b_1-a_2b_2-a_3b_3-a_4b_4)^2\\[4pt]
&+&(a_1b_2+a_2b_1+a_3b_4-a_4b_3)^2\\[4pt]
&+&(a_1b_3+a_3b_1-a_2b_4+a_4b_2)^2\\[4pt]
&+&(a_1b_4+a_4b_1+a_2b_3-a_3b_2)^2.
\end{array}
$$
This corresponds to multiplication of quaternions.
These four special cases correspond to the classification of normed division algebras,
$\R,\C,\bbH,\bbO$,
also due to Hurwitz \cite{Hur1}.

We refer here to \cite{Sha} for general information about square identities and related topics.
The subject still remains an active area of research; see, e.g., \cite{MGO}, \cite{LMO}.

\subsection{Hurwitz matrices}

Hurwitz in his last paper \cite{Hur}  (and independently Radon \cite{Rad})
considers square identities of type $[r,q,q]$.
He rewrites these identities in the form of 
$r$-dimensional families of $(q\times{}q)$-matrices.

Consider an identity 
$$
\left(a_1^2+\cdots{}+a_r^2 \right) \left(b_1^2+\cdots{}+b_q^2 \right)
=c_1^2+\cdots{}+c_q^2.
$$
Since $c_i$ are bilinear in $a_j,b_k$, one has
$$
c=(a_1A_1+\cdots+a_rA_r)\,b
$$
where $b$ and $c$ are $q$-vectors and $A_i$ are real $(q\times{}q)$-matrices.
The square identity is equivalent to the equations: 
\begin{equation}
\label{Hur}
A^{t}_i\, A_i=\Id,
\qquad
A^{t}_i\, A_j+A^{t}_j\, A_i=0,
\quad i\not=j.
\end{equation}
Thus, every $r$-dimensional space of real $(q\times{}q)$-matrices
with this property defines a square identity of type $[r,q,q]$.

It follows that every non-trivial linear combination of the matrices $A_1,\ldots,A_r$
is non-degenerate.
Indeed, if $A=a_1A_1+\cdots+a_rA_r$ then $A^{t}A=\left(a_1^2+\cdots+a_r^2\right)\Id$.

\subsection{Vector fields on spheres}

Square identities have several geometric applications.
Perhaps the best known one is the application to vector fields on spheres.
The famous theorem of F.~Adams \cite{Ada} states that,
{\it on a sphere $S^{q-1}$, the maximal number of vector fields 
linearly independent at each point is $\r(q)-1$}.

The existence part follows from the Hurwitz construction.
Given a square identity of type $[r,q,q]$, i.e., a set of matrices $A_1,\ldots,A_r$
satisfying (\ref{Hur}), following Hurwitz, consider $r-1$ matrices
$$
A_r^{-1}A_1,\,\ldots\,,A_r^{-1}A_{r-1}
$$
that are skew-symmetric.
Applying these linear maps to the points of a round sphere $S^{q-1}\subset\R^q$,
one obtains $r-1$ (linear) tangent vectors fields.
The condition (\ref{Hur}) guarantees that these vector fields are indeed linearly
independent at each point.

The converse statement is the difficult part of the Adams theorem
whose proof is based on algebraic topology.

\subsection{Property P}

F. Adams, P. Lax and R. Phillips \cite{ALP} considered
$r$-dimensional spaces of real $(q\times{}q)$-matrices
with the property that every non-zero element of the space is a non-degenerate matrix;
they called it ``property P''.
This property is a-priori weaker than property (\ref{Hur}).
It follows from the Adams theorem that the restriction to the dimension~$r$ is actually the same:
$r\leq\r(q)$.

We extend the notion of property P to rectangular matrices in a straightforward way.
A vector space of $(p\times{}q)$-matrices has property P if every non-zero matrix of this space
has rank $\min(p,q)$.

\section{Examples and corollaries}\label{ExSec}

\subsection{Tables for small $n$}

The values of dimensions $p$
(as a function in~$n$) of the fibers of $(p,n)$-fibrations
are presented in the following table.
$$
\setlength{\extrarowheight}{3pt}
\begin{array}{c||c|c|c|c|c|c|c|c|c|c|c|c|c|c|c|c|c|c|c|c|c|c}
n  &{\bf 3} &4 &5 &6 &{\bf 7} &\!8 \!& 9&10 & 11 & 12&13&14&{\bf 15}
&\!16\!&17&18&19&20&21&22&23&{\bf 24}\\
\hline
\hline
p & {\bf 1} & & 1 & 2 &{\bf 3} &  &1 & 2 & 3      & 4 &5&6&{\bf 7} & &1&2&3       &4&5&6&7        &{\bf 8}\\
   &             & &    &   &{\bf 1}  &  &    &   &{\bf 1}&   & 1&2&{\bf 3}&   &   &  &{\bf 1}&  &1&2&{\bf 3}&       \\
   &&&&&&&&&&&&&{\bf 1}&&&&&&&&{\bf 1}
\end{array}
$$ 
The dominant values are in boldface.
The dominant values for $n=3,\,7,\,15$ correspond to the Hopf fibrations.
The $(8,24)$-fibration is non-Hopf, in particular, it dominates the Hopf $(3,19)$-fibration.
The next values are as follows
$$
\setlength{\extrarowheight}{3pt}
\begin{array}{c|c|c|c|c|c|c|c|c|c|c|c|c|c|c|c|c|c|c|c}
25 &26&27&28 &29&30 &{\bf 31}&\!32\!&33& 34&35&36&37&38
&39&40&{\bf 41}&42&43&44\\
\hline
\hline
1 & 2 & 3   & 4 & 5& 6 & {\bf 7} &&1& 2 &3       &4  &5&6 & 7        &8&{\bf 9}&2 & 3&4 \\
  &  & {\bf 1}&   & 1& 2 &{\bf 3}&   &&     &{\bf 1}& &1   & 2 &{\bf 3}&  &{\bf 1}&   &{\bf 1}   \\
   &&&&&&{\bf 1}&&&&&&&&{\bf 1}
\end{array}
$$ 
and also
$$
\setlength{\extrarowheight}{3pt}
\begin{array}{c|c|c|c|c|c|c|c|c|c|c|c|c|c|c|c|c|c|c|c}
45 &46&{\bf 47}&\!48\! &49&50 &51&52&53& 54&55&{\bf 56}&57&58
&59&60&61&62&{\bf 63}&\!64\!\\
\hline
\hline
5&6 & {\bf 7 }&  & 1&2& 3    &4 & 5&6&7        &{\bf 8}&1 &2&3       &4&5&6 &{\bf 7}&  \\
 1&2& {\bf 3} &   & & &{\bf 1}&   &1 &2&{\bf 3}&          &   &   &{\bf 1}&  &1&2&{\bf 3}     \\
   &   & {\bf 1}&&&&&&&&{\bf 1}&&&&&&&&{\bf 1}
\end{array}
$$ 
together with
$$
\setlength{\extrarowheight}{3pt}
\begin{array}{c|c|c|c|c|c|c|c|c|c|c|c|c|c|c|c|c|c|c|c}
65 &66&67&68 &69&70 &71&72&73&74&{\bf 75}&76&77&78
&{\bf 79}&\!80\!&81&82&83&84\\
\hline
\hline
1&2& 3& 4& 5&6& 7      &8  &9&10&{\bf 11}&4&5&6 &{\bf 7}&  &1&2&3  & 4  \\
  &  & 1&   & 1 &2&{\bf 3}&   &1&2  &{\bf 3}  &    &1&2&{\bf 3}&  &  &   &{\bf 1}     \\
   &   & &&&&{\bf 1}&&&&{\bf 1}&&&&{\bf 1}&&&& 
\end{array}
$$ 
We note the most interesting cases in sight $(8,24)$, $(9,41)$ and $(11,75)$.

\subsection{Vertical heredity}

We see from the table that some columns contain more that one entry.
For example, $\R^{75}$ admits skew fibrations with fibers of dimensions $11,3$ and $1$.
These triples appear earlier in the table: $\R^{11}$ admits skew fibrations 
with fibers of dimensions $3$ and $1$.
This is not a coincidence.

In one direction, this is due to the following obvious fact:
given a $(p_1,n)$-fibration and a $(p_2,p_1)$-fibration, one obtains a $(p_2,n)$-fibration.
The converse statement is less obvious.

\begin{prop}
\label{Matreshka}
Given a $(p_1,n)$-fibration and a $(p_2,n)$-fibration with $p_1>p_2$, there exists a
$(p_2,p_1)$-fibration.
\end{prop}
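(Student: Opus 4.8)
The plan is to deduce the proposition from Theorem~\ref{TheThm}, which reduces it to an elementary arithmetic property of the Hurwitz--Radon function. Set $d=p_1-p_2\ge 1$. By the ``only if'' direction of Theorem~\ref{TheThm}, the two given fibrations force
$$
p_1\le\r(n-p_1)-1,\qquad p_2\le\r(n-p_2)-1,
$$
and by the ``if'' direction it suffices to establish $p_2\le\r(p_1-p_2)-1=\r(d)-1$. Since $n-p_2=(n-p_1)+d$, writing $a:=n-p_1\ (\ge 1)$ the whole matter reduces to the implication: \emph{if $p_1\le\r(a)-1$ and $p_2\le\r(a+d)-1$, then $p_2\le\r(d)-1$.}

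I would prove this via the following dichotomy about $\r$ alone, which I expect to be the only real content: \emph{for all integers $a,d\ge 1$, either $\r(a)\le\r(d)+d$, or $\r(a+d)=\r(d)$.} Granting it, the implication is immediate: in the first case $p_1\le\r(a)-1\le\r(d)+d-1$, so $p_2=p_1-d\le\r(d)-1$; in the second case $p_2\le\r(a+d)-1=\r(d)-1$. To prove the dichotomy I would argue by $2$-adic valuations. Write $a=2^{\kappa}\cdot(\mathrm{odd})$ and $d=2^{\lambda}\cdot(\mathrm{odd})$, and suppose $\r(a)>\r(d)+d$. From the defining formula $\r(d)\ge 2\lambda$ and $d\ge 2^{\lambda}$, so $\r(a)>2\lambda+2^{\lambda}\ge 2\lambda$; but also $\r(a)\le 2\kappa+2$. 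If $\kappa<\lambda$ then $\r(a)\le 2\kappa+2\le 2\lambda<\r(a)$, a contradiction; if $\kappa=\lambda$ then $\r(a)=\r(d)$ (both are governed by the same power of $2$), so $\r(a)>\r(d)+d$ becomes $d<0$, again impossible. Hence $\kappa>\lambda$, and then $a+d=2^{\lambda}\bigl(2^{\kappa-\lambda}\cdot(\mathrm{odd})+(\mathrm{odd})\bigr)$ has $2$-adic valuation exactly $\lambda$, so $\r(a+d)=\r(d)$.

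Thus the only genuine obstacle is this small arithmetic dichotomy, which rests just on the trivial bounds $2\,v_2(m)\le\r(m)\le 2\,v_2(m)+2$ and $m\ge 2^{v_2(m)}$ together with the fact that $\r(m)$ depends only on $v_2(m)$; no careful residue-mod-$4$ bookkeeping is actually needed. An alternative, more geometric line of attack would be to construct the $(p_2,p_1)$-fibration directly from the two given ones, but that appears to require first setting up the dictionary between skew fibrations and Hurwitz-type families of rectangular matrices with property~P --- considerably more machinery than the route above, which uses Theorem~\ref{TheThm} only as a black box.
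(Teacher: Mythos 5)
Your argument is correct and follows essentially the same route as the paper: both directions of Theorem~\ref{TheThm} reduce the statement to arithmetic of the Hurwitz--Radon function, which is then settled by comparing the $2$-adic valuations of $n-p_1$, $n-p_2$ and $p_1-p_2$. The paper phrases this as a proof by contradiction (observing that $q_1$ and $q_2$ cannot both be divisible by $2^{k+1}$ when $2^k$ is the exact power of $2$ dividing $p_1-p_2$), whereas you package the identical valuation analysis as a direct dichotomy; the mathematical content is the same.
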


\noindent
In other words, if $p_1$ and $p_2$ appear in $n$-th column of the table,
then $p_2$ appears in $p_1$-th column.
In this sense, the first row of the table contains the full information.

\subsection{Dominant cases}

The dominant cases determine all the entries in the table.
Each dominant $(p,n)$-fibration generates a sequences
$(p,n),\,(p-1,n-1),\,(p-2,n-2),\ldots$
reaching the previous dominant case where $p$ jumps up again.

Let us start with a simple observation:
a fibration $(p,n)$ is dominant if and only if there exists $q$ such that
$$
p=\r(q)-1,
\qquad
n=q+\r(q)-1.
$$
Indeed, in this case $(p+1,n+1)$ does not satisfy (\ref{TheEq}) so that the $(p,n)$-fibration is dominant.
Conversely, if $p<\r(q)-1$, then we can increase $p$ and $n$ by 1, and the $(p,n)$-fibration is not dominant.

The most interesting dominant fibrations are those with maximal $p$,
i.e., the fibrations that appear in the first row of the above table.
Let us call these fibrations \textit{doubly dominant}.
The following statements are suggested by the above table.

\begin{prop}
\label{DomProp}
(i)
For every $k$, the fibration $(p,n)$ with
$$
p=\r(2^k)-1,\qquad
p=2^k+\r(2^k)-1
$$
is doubly dominant.

(ii)
If a $(p,n)$-fibration is doubly dominant and $n\geq8$, then
$q=n-p\equiv0\,(\!\!\!\mod8)$.
\end{prop}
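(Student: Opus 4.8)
\medskip
\noindent\textbf{Proof plan.}
I would first translate ``doubly dominant'' into an arithmetic statement about the function $f(q)=q+\r(q)$. By Theorem~\ref{TheThm} a $(p,n)$-fibration exists iff $p\leq\r(n-p)-1$; writing $q=n-p$, the dominant ones are, as already noted, those with $p=\r(q)-1$ and $n=q+\r(q)-1$. For such a fibration a larger fiber dimension $p'>p$ over the \emph{same} $\R^n$ corresponds to some $q'=n-p'$ with $1\leq q'<q$ and, by Theorem~\ref{TheThm}, $q'+\r(q')\geq n+1=f(q)$; conversely any such $q'$ yields such a $p'$. Hence a dominant fibration is doubly dominant exactly when $q=n-p$ is a \emph{strict record} of $f$: $f(q')<f(q)$ for all $1\leq q'<q$. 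In what follows I use only that $\r(q)$ depends solely on $\nu_2(q)$ (so $\r(q)=\r(2^{\nu_2(q)})$), that $j\mapsto\r(2^j)$ is strictly increasing with $\r(2^j)\leq 2j+2$, and the values $\r(1)=1,\ \r(2)=2,\ \r(4)=4,\ \r(8)=8$.

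For part (i) I would check that $q=2^k$ is a strict record. If $1\leq q'<2^k$ then $\nu_2(q')\leq k-1$, so
\[
\r(q')=\r\!\left(2^{\nu_2(q')}\right)\leq\r\!\left(2^{k-1}\right)\leq 2(k-1)+2=2k\leq\r(2^k),
\]
and since also $q'<2^k$ we get $f(q')<f(2^k)$. As the corresponding $(p,n)=\bigl(\r(2^k)-1,\,2^k+\r(2^k)-1\bigr)$-fibration exists and is dominant, it is doubly dominant.

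For part (ii) I would argue by contraposition: if $q$ is a strict record of $f$ and $j:=\nu_2(q)\leq 2$, then $n=f(q)-1<8$. Indeed $\r(q)=\r(2^j)=2^j$ (these are the ``special'' values), so assume $f(q)-1\geq8$; this forces $q\geq 9-2^j\geq 2^j+1$, hence $q':=q-2^j$ is a positive integer $<q$ with $q'\equiv0\pmod{2^{j+1}}$, so $\nu_2(q')\geq j+1$ and $\r(q')\geq\r(2^{j+1})=2^{j+1}$ (here $j+1\leq3$). Then $f(q')\geq(q-2^j)+2^{j+1}=q+2^j=f(q)$, contradicting strictness. So a doubly dominant fibration with $n\geq8$ has $\nu_2(q)\geq3$, i.e.\ $q\equiv0\pmod 8$.

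The one point needing care is the reformulation in the first paragraph — keeping the two inequalities $p\leq\r(n-p)-1$ (existence) and $p\geq\r(n-p)-1$ (dominance) apart, and verifying that ``largest $p$ in the $n$-th column'' is the \emph{strict} record condition rather than a weak one. After that the argument is elementary; it also makes the threshold transparent, since the competitor $q-2^j$ catches up to $q$ precisely because $\r(2^{j+1})=2^{j+1}$, which holds only for $j\leq2$ — exactly the range where $2^{j+1}\in\{2,4,8\}$, i.e.\ where $\r$ attains its maximal possible value $\r(m)=m$.
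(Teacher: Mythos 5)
Your proof is correct and follows essentially the same route as the paper: part (i) rests on $\r(q')\le\r(2^{k-1})<\r(2^k)$ for $q'<2^k$, and part (ii) is the paper's three-case argument ($q'=q-1,\,q-2,\,q-4$) unified into a single step $q'=q-2^{j}$ with $\r(q')\ge\r(2^{j+1})=2^{j+1}$ for $j=\nu_2(q)\le2$. Your ``strict record of $f(q)=q+\r(q)$'' reformulation and the explicit check that $q'=q-2^j$ is positive when $n\ge8$ are tidy additions, but the substance is the same.
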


\subsection{Extreme cases}

Let us consider some special cases.
The first one is the case where no fibrations with  pairwise skew affine fibers exist.

\begin{prop}
\label{TheCor}
(i)
For every $k$, space $\R^{2^k}$ does not admit fibrations with  pairwise skew affine fibers.

(ii)
For every odd $m$, there exists $k_0$ such that if $k\geq{}k_0$ then
space $\R^{2^km}$ does not admit fibrations with  pairwise skew affine fibers.
If $m=3$ or $5$ then $k_0=4$.

(iii)
If $\R^{n}$ does not admit fibrations with pairwise skew affine fibers, then
$n\in\{1,2,4,8\}$ or $n\equiv0\,(\!\!\mod16)$.
\end{prop}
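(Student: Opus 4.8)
The plan is to deduce all three parts from Theorem~\ref{TheThm} together with a short computation on $2$-adic valuations. By Theorem~\ref{TheThm}, $\R^n$ carries a continuous fibration by pairwise skew affine fibers of some positive dimension if and only if there is an integer $p$ with $1\le p\le n-1$ and $p\le\r(n-p)-1$. Writing $v_2$ for the $2$-adic valuation, I would first record the elementary estimates $\r(q)\le 2v_2(q)+2$ for all $q\ge1$ (so $\r(q)-1\le 2v_2(q)+1$) and $\r(q)-1<2^{v_2(q)}$ (which follows since $\r(q)=2^{v_2(q)}$ when $v_2(q)\le3$, while $\r(q)\le 2v_2(q)+2<2^{v_2(q)}$ when $v_2(q)\ge4$), both read off from the Hurwitz--Radon formula by cases on $v_2(q)\bmod4$. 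The key consequence is the \emph{dichotomy}: if $1\le p\le n-1$ and $p\le\r(n-p)-1$, then $v_2(p)=v_2(n)$. Indeed, when $v_2(p)\ne v_2(n)$ one has $v_2(n-p)=\min(v_2(p),v_2(n))$; if $v_2(p)<v_2(n)$ then $\r(n-p)-1<2^{v_2(p)}\le p$, and if $v_2(p)>v_2(n)$ then $\r(n-p)-1\le 2v_2(n)+1<2^{v_2(n)+1}\le p$, in either case contradicting $p\le\r(n-p)-1$. Hence $\R^n$ carries such a fibration iff there is $p$ with $v_2(p)=v_2(n)$, $1\le p\le n-1$ and $p\le\r(n-p)-1$.

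Part (i) is then immediate: if $n=2^k$ then $v_2(p)=v_2(n)=k$ already forces $p\ge2^k=n$, so no admissible $p$ exists. For part (iii) I would argue by contraposition. If $n$ is a power of $2$ it lies in $\{1,2,4,8\}$ when its exponent is $\le3$ and in $16\Z$ otherwise, so suppose $n$ is not a power of $2$, write $v=v_2(n)$, and let $m\ge3$ be the odd part of $n$. I claim that if $v\le3$ then $p=2^v$ is admissible, contradicting the hypothesis. Indeed $v_2(2^v)=v=v_2(n)$ and $2^v<2^vm=n$; moreover $n-2^v=2^v(m-1)$ with $m-1$ even and positive, so $v_2(n-2^v)\ge v+1$, whence $\r(n-2^v)\ge 2(v+1)$ if $v\le2$, and $\r(n-2^v)\ge9$ if $v=3$ (the Hurwitz--Radon formula gives $\r(q)\ge9$ whenever $v_2(q)\ge4$). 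In each of these cases $\r(n-2^v)\ge 2^v+1$, i.e.\ $2^v\le\r(n-2^v)-1$. Therefore a dimension $n$ that is not a power of $2$ and admits no such fibration must have $v_2(n)\ge4$, i.e.\ $16\mid n$, which establishes (iii).

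For part (ii), fix an odd $m$ and put $n=2^km$. By the criterion an admissible fiber dimension has the form $p=2^ks$ with $s$ odd and $1\le s\le m-2$, and then $n-p=2^k(m-s)$ with $m-s$ even, so $v_2(n-p)=k+v_2(m-s)\le k+\lfloor\log_2(m-1)\rfloor$. Using $\r(n-p)-1\le 2v_2(n-p)+1$, admissibility forces $2^k\le 2^ks\le 2k+2\lfloor\log_2(m-1)\rfloor+1$; since the right side is linear in $k$ and the left side exponential, this inequality fails for all $k$ beyond some $k_0=k_0(m)$, and then $\R^{2^km}$ carries no fibration with positive-dimensional skew affine fibers. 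For $m=3$ the only candidate is $p=2^k$, admissible iff $2^k\le\r(2^{k+1})-1$; this holds for $k\le3$, with equality $8=\r(16)-1$ at $k=3$ realized by the $(8,24)$-fibration, and fails for $k\ge4$, so $k_0=4$. For $m=5$ the smallest candidate $p=2^k$ satisfies $n-p=2^{k+2}$ and is admissible iff $2^k\le\r(2^{k+2})-1$, which again holds for $k\le3$ (using $\r(32)=10$) and fails for $k\ge4$, while the remaining candidate $p=3\cdot2^k$ is easily checked to be inadmissible for every $k$; hence again $k_0=4$.

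The substance of the argument is the dichotomy $v_2(p)=v_2(n)$; granting it, the three statements reduce to bookkeeping. The step I expect to require the most care is the interaction of the estimates on $\r$ with the behaviour of $v_2(n-p)$ at the exceptional dimensions $\{1,2,4,8\}$, where $n-2^{v_2(n)}$ degenerates and the convenient choice $p=2^{v_2(n)}$ ceases to be a proper fiber dimension, so that the reduction to Theorem~\ref{TheThm} has to be applied with the constraint $1\le p\le n-1$ always in view.
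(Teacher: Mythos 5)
Your proposal is correct and follows essentially the same route as the paper: both arguments reduce everything to Theorem~\ref{TheThm} and compare the $2$-adic valuations of $p$ and $n$ via the Hurwitz--Radon formula, ruling out $v_2(p)\neq v_2(n)$ (your ``dichotomy'' is just a cleaner packaging of the paper's cases $\ell<k$ and $\ell>k$), then handling $v_2(p)=v_2(n)$ by the exponential-versus-linear growth estimate for (ii) and by the explicit choice $p=2^{v_2(n)}$ for (iii). Your explicit verification that $k_0=4$ for $m=3,5$ is slightly more detailed than the paper's, which leaves that computation to the reader.
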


It follows from the definition of a skew fibration that $p\leq{}q-1$.
The second extreme case is described in the next statement.

\begin{prop}
\label{TheCorBis}
$p=q-1$ if and only if $n\in\{1,\,3,\,7,\,15\}$.
\end{prop}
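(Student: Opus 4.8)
The plan is to read the statement off Theorem~\ref{TheThm}, using the well-known fact (recalled in Section~2) that $\r(q)=q$ precisely for $q\in\{1,2,4,8\}$.

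In a $(p,n)$-fibration one has $q=n-p$, and, as noted just above, skewness forces $p\le q-1$; thus $p=q-1$ is the extremal situation, and in it $n=2q-1$. By Theorem~\ref{TheThm}, a $(q-1,\,2q-1)$-fibration exists if and only if $q-1\le\r(q)-1$, i.e.\ if and only if $\r(q)\ge q$. So the proposition is equivalent to the assertion that $\r(q)=q$ exactly for $q\in\{1,2,4,8\}$.

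It remains to recall why $\r(q)\le q$ always, with equality exactly for $q\in\{1,2,4,8\}$. Since $\r$ depends only on the $2$-adic valuation $k$ of $q=2^k(2m+1)$ and satisfies $\r(q)\le 2k+2$, while $q\ge 2^k$, an elementary induction gives $2k+2<2^k$ for all $k\ge 4$, hence $\r(q)<q$ whenever $k\ge 4$. For $k\in\{0,1,2,3\}$ the defining formula yields $\r(q)=1,2,4,8$ respectively, which equals $q=2^k(2m+1)$ exactly when $m=0$, i.e.\ for $q=1,2,4,8$; these being the dimensions of the normed division algebras $\R,\C,\bbH,\bbO$ mentioned earlier.

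Putting this together, $p=q-1$ is realized by some $(p,n)$-fibration if and only if $q\in\{1,2,4,8\}$, equivalently $n=2q-1\in\{1,3,7,15\}$; apart from the degenerate case $n=1$, the nontrivial realizations are the Hopf fibrations $\R^1\to\R^3\to\R^2$, $\R^3\to\R^7\to\R^4$, $\R^7\to\R^{15}\to\R^8$, whose existence is in any case guaranteed directly by Theorem~\ref{TheThm}. I do not expect a genuine obstacle here: once Theorem~\ref{TheThm} is available, everything reduces to identifying the $q$ with $\r(q)=q$, which is immediate from the piecewise definition of $\r$.
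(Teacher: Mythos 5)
Your proposal is correct and follows essentially the same route as the paper: substitute $p=q-1$ into the condition $p\le\r(q)-1$ of Theorem~\ref{TheThm} to get $q\le\r(q)$, which holds exactly for $q\in\{1,2,4,8\}$, i.e.\ $n=2q-1\in\{1,3,7,15\}$. The only difference is that you also verify the elementary fact that $\r(q)=q$ only for $q\in\{1,2,4,8\}$, which the paper simply quotes from Section~2.
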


These statements will be deduced from Theorem \ref{TheThm} in 
Section \ref{Sec4}.
Let us emphasize that the statements rely on the full force of Adams's theorem
and cannot be obtained using less advanced topological methods
(such as Stiefel-Whitney classes).

\section{Proofs}\label{Sec4}

\subsection{Proof of Theorem \ref{TheThm}} 

Suppose a local $(p,n)$-fibration is given.
Choose one fiber and identify it with $\R^p\subset\R^n$.
Consider the subspace $\R^q$ orthogonal to $\R^p$.
Every fiber close to the ``horizontal'' $\R^p$ is the graph of an affine map:
$$
\eta=B(y)\,\xi+y,
$$
where $y$ is the coordinate in the transversal $\R^q$
and where $B(y):\R^p\to\R^q$ is a linear map continuously depending on $y$
(defined for $y$ sufficiently close to the origin).

Define the linear map $A(y):\R^{p+1}\to\R^q$ whose matrix
is obtained from the matrix of~$B(y)$ by joining the column $y$.
The next lemma gives a necessary and sufficient condition for the fibration
to be skew.

\begin{lem}
\label{LemCond}
The fibers of the fibration are pairwise skew if and only if
$\ker(A(y_1)-A(y_2))=0$ for all pairs of distinct $y_1,y_2$ in the domain.
\end{lem}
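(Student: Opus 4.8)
The plan is to unravel the definition of skewness for two fibers expressed as graphs over the transversal $\R^q$, and to see that it is governed precisely by the augmented maps $A(y)$. Fix two distinct points $y_1,y_2\in\R^q$ and let $L_i$ be the corresponding fiber, $L_i=\{B(y_i)\xi+y_i+\xi : \xi\in\R^p\}$ thought of as an affine $p$-plane in $\R^n=\R^p\oplus\R^q$ (so a point of $L_i$ has $\R^p$-component $\xi$ and $\R^q$-component $B(y_i)\xi+y_i$, up to the obvious identification). Skewness of $L_1$ and $L_2$ means two things: (a) $L_1\cap L_2=\varnothing$, and (b) the direction spaces $\vec L_1$ and $\vec L_2$ intersect only in $0$. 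I would treat (b) first since it is cleaner: $\vec L_i=\{(\xi,B(y_i)\xi):\xi\in\R^p\}$, and a common direction is a pair $(\xi,\eta)$ with $\eta=B(y_1)\xi=B(y_2)\xi$, i.e. $\xi\in\ker(B(y_1)-B(y_2))$. So (b) holds iff $\ker(B(y_1)-B(y_2))=0$.

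Next I would handle (a). A point of $L_1\cap L_2$ is given by $\xi_1,\xi_2\in\R^p$ with $\xi_1=\xi_2=:\xi$ (equality of $\R^p$-components) and $B(y_1)\xi+y_1=B(y_2)\xi+y_2$ (equality of $\R^q$-components), i.e. $(B(y_1)-B(y_2))\xi=y_2-y_1$. Thus $L_1\cap L_2=\varnothing$ exactly when $y_2-y_1$ is \emph{not} in the image of $B(y_1)-B(y_2)$. Now I combine (a) and (b): writing $v=y_2-y_1\neq0$, I claim that [$\ker(B(y_1)-B(y_2))=0$ and $v\notin\operatorname{im}(B(y_1)-B(y_2))$] is equivalent to the single condition $\ker(A(y_1)-A(y_2))=0$, where $A(y)$ has matrix $[\,B(y)\mid y\,]$ so that $A(y_1)-A(y_2)$ has matrix $[\,B(y_1)-B(y_2)\mid y_1-y_2\,]=[\,B(y_1)-B(y_2)\mid -v\,]$, acting on $\R^{p+1}$ with coordinates $(\xi,t)$. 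A vector $(\xi,t)$ lies in $\ker(A(y_1)-A(y_2))$ iff $(B(y_1)-B(y_2))\xi - tv=0$. If $t\neq0$ we may rescale to $t=1$, giving $(B(y_1)-B(y_2))\xi=v$, i.e. a point of $L_1\cap L_2$; if $t=0$ we get $\xi\in\ker(B(y_1)-B(y_2))$, i.e. a common direction. Hence $\ker(A(y_1)-A(y_2))=0$ simultaneously forbids both, and conversely its failure produces either an intersection point or a common direction. This yields the lemma.

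The only mild subtlety — and the step I would be most careful about — is the bookkeeping of which graph-parametrization is being used and the sign/identification conventions, so that ``$\xi_1=\xi_2$'' really is forced by the splitting $\R^n=\R^p\oplus\R^q$ and that the column appended to form $A(y)$ is exactly $y$ (the paper's convention) rather than $-y$; this only affects signs and not the kernel. I would also note explicitly that requiring the condition for \emph{all} pairs of distinct $y_1,y_2$ covers all pairs of fibers near the chosen one, and that the chosen fiber itself (``horizontal'' $\R^p$) is the case $y_2=0$, which is included; so no separate argument is needed for pairs involving the base fiber. With the conventions pinned down the argument is a short linear-algebra computation, so I would present (b), then (a), then the combination into the single rank condition on $A(y_1)-A(y_2)$, and conclude.
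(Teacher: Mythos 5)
Your proof is correct and follows essentially the same route as the paper: the paper likewise splits skewness into the intersection condition (kernel vectors with last coordinate $1$) and the common-direction condition (nonzero kernel vectors with last coordinate $0$), and observes that together they are equivalent to $\ker(A(y_1)-A(y_2))=0$. Your extra care with the rescaling of $t\neq 0$ and the sign conventions is fine but does not change the argument.
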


\begin{proof}
The fibers through $y_1$ and $y_2$ intersect if and only if the system
$$
\left\{
\begin{array}{rcl}
\eta&=&B(y_1)\,\xi+y_1\\[4pt]
\eta&=&B(y_2)\,\xi+y_2
\end{array}
\right.
$$
has a solution.
This is equivalent to
$$
\left(A(y_1)-A(y_2)\right)
\Big(\!\!
\begin{array}{c}
\xi\\
1
\end{array}\!\!
\Big)=0.
$$
Likewise, the fibers through $y_1$ and $y_2$ contain parallel directions
if and only if the system
$$
\left\{
\begin{array}{rcl}
\eta&=&B(y_1)\,\xi\\[4pt]
\eta&=&B(y_2)\,\xi
\end{array}
\right.
$$
has a non-zero solution.
This is equivalent to
$$
\left(A(y_1)-A(y_2)\right)
\Big(\!\!
\begin{array}{c}
\xi\\
0
\end{array}\!\!
\Big)=0.
$$
Hence the result.
\end{proof}

Given a local $(p,n)$-fibration,
let us construct $p$ linearly independent tangent vector fields on $S^{q-1}$.
Consider a small sphere $|y|=\e$.
At a point $y$ of the sphere, consider the vectors given by the columns of~$B(y)$,
and project them on the tangent plane $T_yS^{q-1}$.
If these projected vectors are linearly dependent, then so are the columns of $A(y)$.
This contradicts Lemma \ref{LemCond} for points $y$ and~$0$.

According to the Adams theorem \cite{Ada}, one has the inequality $p\leq\r(q)-1$.
This proves necessity.

Suppose now that $p\leq\r(q)-1$.
The Hurwitz-Radon theorem implies the existence of matrices
$A_1,\ldots,A_{p+1}$ that span a space of $(q\times{}q)$-matrices with property P.
In other words, we have a bilinear map
$$
A:\R^{p+1}\times\R^q\to\R^q
$$
such that $A(x,y)\not=0$, if $x\not=0$ and $y\not=0$.
Equivalently, we have a $q$-dimensional space of $\left((p+1)\times{}q\right)$-matrices
$A(.,y)$ with property P.

Consider the linear map of this $q$-dimensional space of $\left((p+1)\times{}q\right)$-matrices 
to $\R^q$ sending a matrix to its last column.
This linear map is a linear injection, otherwise property P is violated.
Hence it is a linear isomorphism and the last column can be chosen as the coordinate $y$.
We thus obtain a linear family $A(y):\R^{p+1}\to\R^q$.
The condition of Lemma \ref{LemCond} is satisfied because 
$A(y_1)-A(y_2)=A(y_1-y_2)$ by linearity.

It remains to show that the defined fibration is global.
This means that for every $(\xi,\eta)\in\R^n$ there exists $y\in\R^q$
such that $\eta=B(y)\,\xi+y$, i.e.,
\begin{equation}
\label{TE}
\eta=
A(y)
\Big(\!\!
\begin{array}{c}
\xi\\
1
\end{array}\!\!
\Big).
\end{equation}
Recall that the bilinear map $A(x,y)$ has the property that 
the linear map $A(x,.):\R^q\to\R^q$ is a linear isomorphism for $x\not=0$,
in particular, for $x=\Big(\!\!
\begin{array}{c}
\xi\\
1
\end{array}\!\!
\Big)$.
Therefore, equation (\ref{TE}) has a solution.

\subsection{Proof of Proposition \ref{Matreshka}}

Let $n=p_1+q_1=p_2+q_2$ and $p_1>p_2$.
Arguing by contradiction, assume that there is no $(p_2,p_1)$-fibration.
Theorem \ref{TheThm} implies:
$$
p_1<\r(q_1),
\qquad
p_2<\r(q_2),
\qquad
p_2\geq\r(p_1-p_2).
$$
Write $p_1-p_2=2^km$ where $m$ is odd, then $p_2\geq\r(2^k)$.
On the other hand, $q_2-q_1=2^km$.
It follows that $q_1$ and $q_2$ cannot be both divisible by $2^{k+1}$,
and thus
$$
\r(q_1)\leq\r(2^k)
\qquad
\hbox{or}
\qquad
\r(q_2)\leq\r(2^k).
$$
Consider the first case.
Then $p_2\geq\r(2^k)\geq\r(q_1)>p_1$, a contradiction.
In the second case, one has $p_2\geq\r(2^k)\geq\r(q_2)>p_2$, again a contradiction.

\subsection{Proof of Proposition \ref{DomProp}}\label{DoP}

Part (i).
Given a dominant fibration $(p,n)$ with $p=\r(2^k)-1$ and $n=2^k+\r(2^k)-1$, we want to show that
there is no $p'>p$ and $q'$ such that $p'+q'=n$ and $p'\leq\r(q')-1$.
This is true since $q'<q$ so $\r(q')<\r(q)$ which contradicts the inequality $p'>p$.

Part (ii).
Assume that $q$ is odd so that $p=0$.
Let $q'=q-1$ then $p'=1$ and $\r(q')\geq2$ hence $p'<\r(q')$.
Thus $(p,n)$-fibration is not doubly dominant.

Assume that $q=2m$, where $m$ is odd so that $p\leq1$.
Let $q'=q-2$, then $p'=p+2\leq3$.
Since $q'\equiv0\,(\!\!\!\mod4)$, one has $\r(q')\geq4$, and thus $p'\leq\r(q')-1$.

Assume that $q=4m$, where $m$ is odd so that $p\leq3$.
Let $q'=q-4$, then $p'=p+4\leq7$.
Since $q'\equiv0\,(\!\!\!\mod8)$, one has $\r(q')\geq8$, and thus $p'\leq\r(q')-1$.

\subsection{Proof of Proposition \ref{TheCor}}\label{CoS}
Part (i).
Assume that $n=2^k$ and let $p=2^\ell\,t$ where $t$ is odd.
Then 
$$
q=2^k-2^\ell\,t=2^\ell(2^{k-\ell}-t).
$$
Therefore, the condition $p\leq\r(q)-1$ reads
$2^\ell\,t\leq\r(2^\ell)-1$.
Since $t\geq1$, we have, in particular, the inequality
$2^\ell<\r(2^\ell)$
 that never holds.
 
 Part (ii).
 Assume that $n=2^k\,m$ where $m$ is odd, and let $p=2^\ell\,t$ where $t$ is odd.
Then one has three cases: a) $r<k$, b) $r>k$, c) $r=k$.
In cases a) and b) the argument is similar to the one above.
In case c) one has
$
q=2^k(m-t).
$
The condition (\ref{TheEq}) then reads
$$
2^k\,\leq\,2^kt\,(=p)\,<\,\r\left(2^k(m-t)\right).
$$
Since $m-t=2^\ell\,s$ where $\ell$ is bounded above by a constant depending on $m$ and $s$ is odd,
one has $2^k<\r(2^{k+\ell})\leq2(k+\ell)+2.$
This inequality fails for sufficiently large $k$.

Part (iii).
If $n$ is odd then set $p=1$. Since $\r(n-1)\geq2$,
Theorem \ref{TheThm} implies the existence of a fibration $\R\to\R^n\to\R^{n-1}$.

Assume that $n=2m$, where $m\geq3$ is odd.
Set $p=2$, then $q=2(m-1)\equiv0\,(\!\!\!\mod4)$, and therefore $\r(q)\geq4$.
This implies the existence of a $(2,n)$-filtration.

Assume that $n=4m$, where $m\geq3$ is odd.
Set $p=4$, then $q=4(m-1)\equiv0\,(\!\!\!\mod8)$, and therefore $\r(q)\geq8$.
This implies the existence of a $(4,n)$-filtration.

Assume finally that $n=8m$, where $m\geq3$ is odd.
Set $p=8$, then $q=8(m-1)\equiv0\allowbreak\,(\!\!\!\mod16)$, and therefore $\r(q)>8$.
This implies the existence of a $(8,n)$-filtration.
This completes the proof.

\subsection{Proof of Proposition \ref{TheCorBis}}\label{CoSBis}

Assume that $p=q-1$, then the condition (\ref{TheEq}) implies
that $q\leq\r(q)$.
This holds only for $q=1,2,4,8$.

\medskip

\noindent \textbf{Acknowledgments}.
This project originated at the Institut Henry Poincar\'e (IHP), in the framework of the RiP program. 
We are grateful to IHP for its hospitality.
We are also pleased to thank S.~Morier-Genoud for enlightening discussions.
V.~O. was partially supported by the PICS ``PENTAFRIZ'' of CNRS,
S.~T. was partially supported by the Simons Foundation grant No 209361 and by the NSF grant DMS-1105442.


\end{document}